\def\conv{\operatorname{conv}}
\def\dom{\operatorname{dom}}
\def\norm{\operatorname{||}}
\def\intt{\operatorname{int}}
\def\bd{\operatorname{bd}}
\def\supp{\operatorname{supp}}
\DeclareMathOperator*{\argmin}{argmin}
\def\R{\mathbb{R}}\def\Z{\mathbb{Z}}\def\Q{\mathbb{Q}}
\def\T{\mathsf{T}}
\def\l{\,\mid\,}
\newtheorem{theorem}{Theorem}
\newtheorem{definition}[theorem]{Definition}
\newtheorem{corollary}[theorem]{Corollary}
\newtheorem{example}[theorem]{Example}
\title{\bf Duality for Mixed-Integer Convex Minimization}
\author{Michel Baes, Timm Oertel and Robert Weismantel	}
\begin{document}

\date{December 8, 2014}

\maketitle

\begin{abstract}
We extend in two ways the standard Karush-Kuhn-Tucker optimality conditions to problems with a convex objective, convex functional constraints, and the extra requirement that some of the variables must be integral. While the standard Karush-Kuhn-Tucker conditions involve separating hyperplanes, our extension is based on lattice-free polyhedra. Our optimality conditions allow us to define an exact dual of our original mixed-integer convex problem. 
\end{abstract}

\section{Introduction}

Several attempts have been made in the past to define formally a dual of a linear integer  or mixed-integer programming problem. Let us first mention some important developments in this direction. 

One idea to define a dual program associated with a binary linear integer programming problem is to encode the given
$0/1$-problem in form of a linear program in an extended space, so that the new variables correspond to linearizations of products of original variables. The variables of the dual of the resulting linear optimization program can be reinterpreted in terms of the original variables. This concept of duality has its origins in the work of \cite{SheraliAdams90,SheraliAdams99} and \cite{LovaszSchrijver91} and is closely connected with the earlier work of \cite{Balas75a,Balas85b} on disjunctive optimization. 
It also provides us an interesting link to the theory of polynomial optimization  including
duality results associated with hierarchies of semidefinite programming problems, see \cite{Lasserre13}.
For a nice survey treating the relationships and differences between several relaxations of this kind we refer to 
\cite{Laurent01}.

A second important development in integer optimization is based on the connections between valid inequalities and subadditive functions. This leads to a formalism that allows us to establish
a subadditive dual of a general mixed integer linear optimization problem, see \cite{Johnson73,Jeroslow78,Jeroslow79,Jeroslow1979,Johnson79}
and \cite{NemhauserWolsey88} for a treatment of the subject and further references. Recently, a strong subadditive
dual for conic mixed integer optimization has been constructed in \cite{RW_MDV2012}.

There are several other special cases for which the dual of a mixed integer optimization problem has been derived. One such example is based on the theory of discrete convexity established in \cite{RW_M2003}. Here, an explicit dual is constructed for L-convex and M-convex functions.

A third general approach to develop duality in several subfields of optimization is based on the Lagrangian relaxation method.
The latter method is broadly applicable and -- among others -- leads to a formalism of duality in convex optimization.  
  The connection between the Lagrangian dual and linear
  relaxations of linear integer optimization problems
has its origins in  \cite{HeldKarp70}.  This paper developed a combinatorial version of a Lagrangian relaxation in form of 1-trees for the  traveling salesman problem. Since then there is a large number 
of applications  using this relaxation technique for integer optimization problems, see e.g. Chapter II.3 in \cite{NemhauserWolsey88}. 

Our point of departure is the strong duality theorem for convex optimization based on the Lagrangian relaxation method. We will show that optimality certificates and duality in convex optimization have a very natural mixed-integer analogue. 
A duality theory in Euclidean space follows from a precise interplay between points -- that are viewed as primal objects --  and level sets of linear functions, that is, closed half-spaces, interpreted as dual objects. It turns out that there is a similar interplay in the mixed-integer setting. Here, the primal objects are sets of points, whereas the dual objects are lattice-free open polyhedra.
Our motivation for studying optimality certificates and a mixed-integer convex dual comes from the important developments in convex optimization in the past decade. As a first step towards new mixed-integer convex algorithms, it seems natural to make an attempt of extending some of the basic convex optimization tools to the mixed-integer setting.

\section{Mixed-integer optimality certificates}

Let $f:\dom(f)\mapsto\R$ be a continuous convex function.
In order to simplify our exposition we may assume w.l.o.g. here that $\dom(f) = \R^n$. Assume that $f$ has a, not necessarily unique, minimizer $x^\star$. 
Then a necessary and sufficient certificate for $x^\star$ being a minimizer of $f$ is that
$0\in\partial f(x^\star)$, i.e. the zero-function is in the subdifferential of $f$ at $x^\star$.
Hence
$$x^\star=\argmin\limits_{x\in\R^n} f(x) \Longleftrightarrow 0\in\partial f(x^\star).$$
The question  emerges how to obtain a certificate that a point $x^\star\in\Z^n \times \R^d$ solves the corresponding mixed-integer convex problem 
\begin{equation}\label{eq:unconstrained_MICO}
x^\star=\argmin\limits_{x\in\Z^n \times \R^d} f(x)?
\end{equation}
Let us first explain the idea of our approach. By definition, $x^\star=\argmin\limits_{x\in\Z^n \times \R^d} f(x)$ if and only if 
\begin{equation}
\label{bbwe}
 \{ x\in\Z^n \times \R^d \mid  f(x) < f(x^\star)\} = \emptyset.
\end{equation}
The level set $\{ x\in\R^{n+d} \mid  f(x) \leq f(x^\star)\}$ is convex. If it is nonempty, then its projection to its first $n$ components, that is, to the subspace spanned by the integer variables is again a convex set.  Clearly,  $x^\star=\argmin\limits_{x\in\Z^n \times \R^d} f(x) $ if and only if 
$$ Q:=\{ {z} \in\R^{n} \mid \exists y \in \R^d, x= {(z,y)} \text{ and } f(x) < f(x^\star)\} \cap \Z^n = \emptyset.$$
From a theorem of Lovasz, inclusionwise maximal lattice-free convex sets are polyhedra \cite{Lovasz89}: we can restrict our attention to such polyhedra $P$ that contain the (convex) projection $Q$. From the theorem of Doignon \cite{Doignon1973}, it follows that a subset of at most $2^n$ inequalities in the description of $P$ are enough to prove that $\intt(P)  \cap \Z^n = \emptyset$.
It remains to show how to relate {these} $2^n$ inequalities to the function $f$. The following theorem, which is an immediate consequence of our Theorem \ref{mixedIntgerKKT} proved at the end of this section, clarifies this relationship, providing a necessary and sufficient condition for our original mixed-integer convex problem. Each of these $2^n$ inequalities is related to a mixed-integer point, the set of which constitutes our \emph{optimality certificate}. Condition (a) ensures that $x^\star$ is one of the points of the optimality certificate and is the best of them. Also, in view of Condition (c), every point $x$ in the certificate minimizes $f$ on its own fiber, that is, in the set $\{(x_1,\ldots,x_n)^T\}\times\R^d$. Finally, the subgradient of $f$ at each point of the certificate defines a half-space. The interior of their intersection plays the role of a polyhedron whose projection on the first $n$ components is the $P$ described above. Condition (b) ensures that this interior is lattice-free.

One direct implication of the result {below} is that it provides us with a certificate for the optimality of a given mixed-integer point. The  verification can be performed in polynomial time, provided that the number of integer variables is a constant.

\begin{theorem}\label{mixedIntgerunconstrained}
$x^\star=\argmin\limits_{x\in\Z^n \times \R^d} f(x)$  
if and only if  there exist $k\le 2^n$ points
$x_1=x^\star,x_2,\dots,x_k\in\Z^{n}\times\R^{d}$ and vectors $h_{i }\in\partial f(x_i)$
such that the following conditions hold:
\begin{itemize}
\item[(a)] $f(x_1)\le \ldots \le f(x_k)$.
\item[(b)] $\{x\in\R^{n+d} \l   h_{i}^\T (x-x_i) < 0 \text{ for all } i\}\cap(\Z^{n}\times\R^d)=\emptyset$.
\item[(c)] $h_{i} \in\R^n\times\{0\}^d$ for $i=1,\dots,k$.\end{itemize}
\end{theorem}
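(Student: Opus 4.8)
\emph{Easy direction.} Suppose a certificate exists and $x^\star$ were not optimal, i.e.\ $f(\bar x)<f(x^\star)$ for some $\bar x\in\Z^n\times\R^d$. For each $i$ the subgradient inequality gives $h_i^\T(\bar x-x_i)\le f(\bar x)-f(x_i)\le f(\bar x)-f(x_1)<0$ by (a), so $\bar x$ lies in the set of (b) --- a contradiction. (This half uses only (a) and (b).) So I would now assume $x^\star=\argmin_{x\in\Z^n\times\R^d}f(x)$ and build a certificate.

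\emph{Reduction to a pure integer problem.} Put $\hat f(z):=\inf_{y\in\R^d}f(z,y)$. Since $f$ is finite convex on $\R^{n+d}$ and admits a mixed-integer minimizer, one checks that $\hat f$ is finite convex (hence continuous) on $\R^n$, and that --- by a short recession-function argument --- attainment of $\inf_y f(z,y)$ is a property independent of $z$, so it holds for every $z$ because it holds at the fibre of $x^\star$. With $z^\star$ the integer part of $x^\star$ one gets $\hat f(z^\star)=f(x^\star)=\min_{z\in\Z^n}\hat f(z)$. The bookkeeping lemma I would use is: if $g\in\partial\hat f(z)$ and $y_z$ attains the fibre infimum at $z$, then $(g,0)\in\partial f(z,y_z)$ --- this is what makes (c) automatic.

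\emph{Applying Doignon's theorem.} For each $z\in\Z^n$ fix $g_z\in\partial\hat f(z)$ and $y_z\in\argmin_y f(z,y)$, and set $U_z:=\{w\in\R^n\mid g_z^\T(w-z)<0\}$. Since $w\notin U_w$ for every $w\in\Z^n$, the family $\{U_z\}_{z\in\Z^n}$ has empty intersection with $\Z^n$, so Doignon's theorem (the integer Helly number is $2^n$) yields $z_1,\dots,z_k\in\Z^n$, $k\le2^n$, with $\bigl(\bigcap_iU_{z_i}\bigr)\cap\Z^n=\emptyset$. Let $x_i:=(z_i,y_{z_i})$ and $h_i:=(g_{z_i},0)\in\partial f(x_i)$. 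Then (c) holds by construction; (b) holds because $h_i^\T(x-x_i)$ involves only the first $n$ coordinates of $x$, so the set in (b) equals $\{(w,y):w\in\bigcap_iU_{z_i}\}$, which avoids $\Z^n\times\R^d$; and $f(x_i)=\hat f(z_i)\ge\hat f(z^\star)=f(x^\star)$ for every $i$. To force $x_1=x^\star$: as $z^\star\notin\bigcap_iU_{z_i}$, there is $i_0$ with $g_{z_{i_0}}^\T(z^\star-z_{i_0})\ge0$; comparing this with $\hat f(z^\star)\ge\hat f(z_{i_0})+g_{z_{i_0}}^\T(z^\star-z_{i_0})$ and with $\hat f(z_{i_0})\ge\hat f(z^\star)$ forces $g_{z_{i_0}}^\T(z^\star-z_{i_0})=0$ (so $U_{z_{i_0}}$ is unchanged when re-anchored at $z^\star$) and $g_{z_{i_0}}\in\partial\hat f(z^\star)$. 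Hence replace $z_{i_0}$ by $z^\star$, keep $g_{z_{i_0}}$, and relabel by $f$-value to get $x_1=x^\star$ and (a).

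\emph{Main obstacle.} The preliminaries on $\hat f$ --- finiteness, attainment of the fibre infima, and the subgradient lifting $\partial\hat f(z)\ni g\mapsto(g,0)\in\partial f(z,y_z)$ --- are routine given that a minimizer exists. The genuine difficulty, and the step I expect to be the crux, is recognising \emph{which} convex sets to feed into Doignon's theorem: not a single maximal lattice-free polyhedron built from a level set of $f$ (whose facets are awkward to equip with integer points and facet-parallel subgradients), but the family of all subgradient half-spaces $U_z$ anchored at lattice points. This one choice simultaneously produces the bound $k\le2^n$, delivers (b) with $\overline{\bigcap_iU_{z_i}}$ playing the role of the lattice-free polyhedron, and --- via the elementary equality-chase above --- lets one insert $x^\star$ into the certificate without exceeding $2^n$ points.
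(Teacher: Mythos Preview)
Your proof is correct and takes essentially the same route as the paper, which derives this result as the unconstrained special case of its constrained KKT theorem: optimize $f$ over each fiber $\{z\}\times\R^d$ to obtain a subgradient in $\R^n\times\{0\}^d$, form the corresponding open half-spaces indexed by $\Z^n$, and invoke Doignon to extract at most $2^n$ of them with lattice-free intersection. Your explicit use of the infimal projection $\hat f$ and your re-anchoring trick for inserting $x^\star$ into the certificate are tidier than the paper's one-line ``by convexity, one of those $k$ points has to be a solution $x^\star$'', but the substance is the same.
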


{As announced, we formulate and prove a version of the above theorem that takes possible convex functional constraints to problem \eqref{eq:unconstrained_MICO} into account.}

{Let} $g_1,\ldots,g_m:\dom(f)\mapsto\R$ be continuous convex functions. Again we may assume w.l.o.g. that $\dom(g_j) = \R^n$ for all $j$. By $g(x)$ we denote the vector of components $g_1(x), \ldots,g_m(x)$.  Let us first discuss the continous convex optimization problem  
\begin{equation}\label{eq:constrained_MICO}
x^\star=\argmin\limits_{x\in\R^n,\atop g(x)\le0} f(x).
\end{equation}
Assume that there exists a point {$y\in\R^n$} fulfilling the so-called Slater condition, that is, $g(y)<0$.
Under this assumption the Karush-Kuhn-Tucker (KKT) conditions (e.g. \cite{Karush39,KuhnTucker51}) 
provide necessary and  sufficient optimality conditions.
Namely, the point $x^\star$ such that $ g(x^\star)\le0$ attains the optimal continuous solution if and only if there exist $h_f\in\partial f(x^\star)$, $h_{g_i}\in\partial g_i(x^\star)$, for $i=1,\dots,m$ and
non-negative $\lambda_i$, $i=1,\dots,m$, such that
\begin{equation}\label{eq:KKT_continuous}
h_f+\sum_{i=1}^m\lambda_i h_{g_i} = 0 \text{ and } \lambda_i g_i(x^\star)=0 \; \forall i. 
\end{equation}
Note that in this representation it suffices  to consider only those $g_i(x^\star)$ that are active, i.e. $\lambda_i\neq 0$,{ and for which the corresponding $h_{g_i}$ are linearly independent}.

It is our intention to generalize these optimality conditions to the mixed-integer setting
\begin{equation}\label{chapter:CenterPoints::equ:mixedIntegerOpti}
x^\star=\argmin\limits_{x\in\Z^{n}\times\R^d,\atop g(x)\le0} f(x).
\end{equation}
We first generalize the Slater condition.

\begin{definition} 
We say that the constraints $g(x)\le0$ fulfill the  {\it mixed-integer Slater condition} if for every point ${(z,y)}\in\Z^n\times\R^d$ with $g({(z,y)})\le0$ there exists a ${y'}\in\R^d$ such that $g({(z,y')})<0$.
\end{definition}

Under the assumption of the mixed-integer Slater condition, we next formulate and justify mixed-integer optimality conditions.

As in the unconstrained situation, a certificate is given by a list of $k\leq 2^n$ mixed-integer points $x_1,x_2,\ldots,x_k$. Our substitute for the nonnegative multipliers $\lambda$ are $k$ nonnegative vectors $u_1,\ldots,u_k$ of size $m+1$. Condition (c) asserts that none of these vectors is null; their maximal sparsity is a consequence of Caratheodory's Theorem.
As in Theorem \ref{mixedIntgerunconstrained}, Condition (e) indicates that the points of the certificate are optimal in their own fiber. Condition (a) ensures that the mixed-integer optimum $x^\star$ is in the certificate, and is the best among all those that are in it. The additional complementarity conditions are inherited from the continuous KKT theorem quoted above. 
Among the certificate points $x_2,\ldots,x_k$, several might be infeasible for the primal problem. Let us define the set $I_i:=\{1\leq j\leq m\mid g_j(x_i) = \max_{1\leq \ell\leq m}g_{\ell}(x_i)\}$ for every $1\leq i\leq k$. Condition (b) addresses infeasible points $x_i$ in the certificate, that is, those for which $g_j(x_i)>0$ for any $j\in I_i$. It ensures that $u_{i,j}>0$ only when $j\notin I_i$. (Note that the complementarity condition for feasible points $x_i$'s in Condition (a) can be expressed identically). The lattice-freeness Condition (d) is the natural extension of Condition (b) in Theorem \ref{mixedIntgerunconstrained}.

Similarly to the unconstrained case, one implication of this result is that the optimality of a mixed-integer point can be verified in polynomial time, provided that the number of integer variables is a constant.

\begin{theorem}\label{mixedIntgerKKT}
Suppose that the problem \eqref{eq:constrained_MICO} is feasible. Let $g$ fulfill the  {\it mixed-integer Slater condition}.
A point $x^\star\in\Z^{n}\times\R^d$ is optimal for the mixed-integer constrained problem \eqref{chapter:CenterPoints::equ:mixedIntegerOpti} 
if and only if $g(x^\star)\le0$ and there exist $k\le 2^n$ points
$x_1=x^\star,x_2,\dots,x_k\in\Z^{n}\times\R^{d}$ and $k$ vectors $u_1,\dots,u_k\in\R_+^{m+1}$
with corresponding $h_{i,m+1}\in\partial f(x_i)$, and $h_{i,j}\in\partial g_j(x_i)$ for $j=1,\dots,m$ 
such that the following five conditions hold:
\begin{itemize}
\item[(a)] If $g(x_i)\le0$ then $f(x_i)\ge f({x^\star})$, $u_{i,m+1}>0$ and $u_{i,j}g_j(x_i)=0$ for $j=1,\dots,m$.
\item[(b)] {If $g(x_i)\nleq0$ then $u_{i,m+1}=0$ and $u_{i,j}=0$  for all $j\notin I_i:=\{1\leq j\leq m: g_j(x_i) = \max_{1\leq \ell\leq m}g_{\ell}(x_i)\}$.}
\item[(c)]  $1\le |\supp(u_i)| \le d+1$ for $i=1,\dots,k$.
\item[(d)] $\{x\in\R^{n+d} \l  \sum_{j=1}^{m+1}u_{i,j}h^\T_{i,j} (x-x_i) < 0 \text{ for all } i\}\cap(\Z^{n}\times\R^d)=\emptyset$.
\item[(e)]  $\sum_{j=1}^{m+1}u_{i,j}h_{i,j}\in\R^n\times\{0\}^d$ for $i=1,\dots,k$.
\end{itemize}
\end{theorem}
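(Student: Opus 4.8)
The plan is to establish the two implications separately. The \emph{sufficiency} direction --- that the existence of the listed certificate forces $x^\star$ to be optimal --- is a direct convexity computation using (a)--(d) (but not (e)). The \emph{necessity} direction --- that an optimal $x^\star$ admits such a certificate --- I would obtain by applying Theorem~\ref{mixedIntgerunconstrained} to the \emph{value function} of problem \eqref{chapter:CenterPoints::equ:mixedIntegerOpti} and then decoding each resulting subgradient through the ordinary KKT theorem on the corresponding fiber.

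For sufficiency, suppose $g(x^\star)\le0$ together with points $x_1=x^\star,\dots,x_k$, vectors $u_1,\dots,u_k\in\R_+^{m+1}$ and subgradients $h_{i,m+1}\in\partial f(x_i)$, $h_{i,j}\in\partial g_j(x_i)$ as in (a)--(e), but assume for contradiction that some $\bar x=(\bar z,\bar y)\in\Z^{n}\times\R^d$ has $g(\bar x)\le0$ and $f(\bar x)<f(x^\star)$. Fix $i$ and set $\ell_i:=\sum_{j=1}^{m+1}u_{i,j}h_{i,j}^\T(\bar x-x_i)$. If $g(x_i)\le0$, then from the subgradient inequalities $h_{i,m+1}^\T(\bar x-x_i)\le f(\bar x)-f(x_i)$ and $h_{i,j}^\T(\bar x-x_i)\le g_j(\bar x)-g_j(x_i)$, from $u_{i,j}\ge0$, $g_j(\bar x)\le0$ and the complementarity $u_{i,j}g_j(x_i)=0$ of (a), and from $u_{i,m+1}>0$ and $f(x_i)\ge f(x^\star)>f(\bar x)$ of (a), one gets $\ell_i\le u_{i,m+1}\bigl(f(\bar x)-f(x_i)\bigr)<0$. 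If $g(x_i)\nleq0$, then by (b) the only entries of $u_i$ that can be nonzero are the $u_{i,j}$ with $j\in I_i$, at least one of which is positive by (c); since $g_j(x_i)=\max_\ell g_\ell(x_i)>0$ and $g_j(\bar x)\le0$ for every $j\in I_i$, the subgradient inequalities give $\ell_i\le\sum_{j\in I_i}u_{i,j}\bigl(g_j(\bar x)-g_j(x_i)\bigr)<0$. Hence $\bar x$ lies in the set of (d), contradicting that this set contains no point of $\Z^{n}\times\R^d$; so $x^\star$ is optimal.

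For necessity, assume $x^\star=(z^\star,y^\star)$ is feasible and optimal, and let $v(z):=\inf\{f(z,y):y\in\R^d,\ g(z,y)\le0\}$, with $v(z)=+\infty$ on infeasible fibers. The set $\{(z,y):g(z,y)\le0,\ f(z,y)<f(x^\star)\}$ is convex, so its image $Q=\{z:v(z)<f(x^\star)\}$ under the projection onto the first $n$ coordinates is convex, and $Q\cap\Z^n=\emptyset$ since any integer $z\in Q$ would yield a feasible mixed-integer point beating $x^\star$; moreover $v(z^\star)=f(x^\star)$ and $v(z)\ge f(x^\star)$ for all $z\in\Z^n$, so $z^\star\in\argmin_{z\in\Z^n}v(z)$. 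I would first treat the case where every integer fiber is feasible: then $v$ is everywhere finite (its domain is convex and contains $\Z^n$, hence is all of $\R^n$) and convex, hence continuous, and Theorem~\ref{mixedIntgerunconstrained} (with no continuous variables) yields $k\le2^n$ lattice points $z_1=z^\star,\dots,z_k\in\Z^n$ and vectors $\bar h_i\in\partial v(z_i)$ with $v(z_1)\le\dots\le v(z_k)$ and $\{z:\bar h_i^\T(z-z_i)<0\ \forall i\}\cap\Z^n=\emptyset$. Assuming the fiber infimum at $z_i$ is attained (otherwise a recession-cone/limiting argument is needed; see below), strong duality for the convex program $\min\{f(z_i,y):g(z_i,y)\le0\}$ --- valid because the ordinary Slater condition at $z_i$ is exactly what the mixed-integer Slater condition supplies at the integer point $z_i$ --- together with the standard subdifferential formula for a marginal function under a constraint qualification, produces a primal optimizer $y_i$ (take $y_1:=y^\star$, so $x_1:=(z_1,y_1)=x^\star$), multipliers $\mu^{(i)}\in\R_+^m$ with $\mu^{(i)}_j g_j(z_i,y_i)=0$, and subgradients $h_{i,m+1}\in\partial f(x_i)$, $h_{i,j}\in\partial g_j(x_i)$, where $x_i:=(z_i,y_i)$, such that $h_{i,m+1}+\sum_{j=1}^m\mu^{(i)}_j h_{i,j}=(\bar h_i,0)\in\R^n\times\{0\}^d$. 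Putting $u_i:=(\mu^{(i)},1)$, this last identity is Condition~(e); Condition~(a) holds because $g(x_i)\le0$, $f(x_i)=v(z_i)\ge v(z^\star)=f(x^\star)$, $u_{i,m+1}=1>0$ and complementarity hold; Condition~(b) is vacuous; and Condition~(d) holds because $\sum_{j}u_{i,j}h_{i,j}^\T(x-x_i)=\bar h_i^\T(z-z_i)$ for $x=(z,y)$, so the set of~(d) is the cylinder over $\{z:\bar h_i^\T(z-z_i)<0\ \forall i\}$, which misses $\Z^n$. Condition~(c) follows by thinning $u_i$ with Caratheodory's theorem (as in the continuous KKT theorem recalled above) down to at most $d+1$ nonzero entries; carrying this out compatibly with~(d) is one of the points requiring care.

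The main obstacle is extending this to the general case, where an integer fiber may be infeasible so that $v\equiv+\infty$ there and Theorem~\ref{mixedIntgerunconstrained} does not apply verbatim. Such a fiber adds nothing to $Q$, yet a facet of the lattice-free polyhedron underlying Theorem~\ref{mixedIntgerunconstrained} may still pass through its base lattice point $z_i$; the matching certificate point must then be a minimizer $y_i$ of the convex function $y\mapsto\max_{1\le j\le m}g_j(z_i,y)$, with $u_i$ a convex combination --- supported on $I_i$ --- of subgradients of the most violated constraints for which $\sum_{j\in I_i}u_{i,j}h_{i,j}$ has zero continuous part and $u_{i,m+1}=0$, which is precisely the content of Conditions~(b) and~(e). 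Making the feasible and infeasible cases uniform --- for instance by replacing $v$ by the merit value $z\mapsto\inf_y\max\{f(z,y)-f(x^\star),\,g_1(z,y),\dots,g_m(z,y)\}$ and checking it is a proper convex function under the standing assumptions, or by constructing a lattice-free polyhedron around $Q$ directly and aligning each of its facet normals with a fiber-wise optimality direction --- together with handling possible non-attainment of the fiber infima, is the technical heart of the proof.
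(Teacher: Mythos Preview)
Your sufficiency argument is correct and is essentially the paper's: any feasible $\bar x$ with $f(\bar x)<f(x^\star)$ would satisfy all $k$ strict inequalities, contradicting~(d).

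For necessity you take a different route from the paper. Note first a formal circularity: in the paper, Theorem~\ref{mixedIntgerunconstrained} is stated as an immediate consequence of Theorem~\ref{mixedIntgerKKT}, not proved independently, so invoking it here is illegitimate as the paper is organised (though of course it could be proved on its own). More substantively, the paper does \emph{not} pass through a value function. It works directly fiber by fiber: for each $z\in\Z^n$ it solves $\min_y\{f(z,y):g(z,y)\le0\}$ when the fiber is feasible (continuous KKT applies there by the mixed-integer Slater condition) and $\min_y\max_{j} g_j(z,y)$ when it is not (subdifferential of a pointwise maximum), applies Carath\'eodory \emph{already at this stage} to obtain $|\supp(u_z)|\le d+1$, and only then forms $h_z:=\sum_j u_{z,j}h_{z,j}\in\R^n\times\{0\}^d$ and the open half-space $L_z=\{x:h_z^\T(x-(z,y_z))<0\}$. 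Lattice-freeness of $\bigcap_{z\in\Z^n}L_z$ is proved by the same convexity computation you use for sufficiency, and Doignon's theorem extracts $k\le 2^n$ fibers, one of which can be taken to be $z^\star$.

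The order of operations is precisely where your approach has a genuine gap. If you first fix specific subgradients $\bar h_i\in\partial v(z_i)$ via Theorem~\ref{mixedIntgerunconstrained} and only afterwards decompose $(\bar h_i,0)=h_{i,m+1}+\sum_j\mu_j^{(i)}h_{i,j}$, then the Carath\'eodory thinning needed for~(c) acts on the $d$-dimensional $y$-block (that is where the fiber KKT equation lives) and will in general change the $z$-part of the combined vector. The new $z$-part is still a subgradient of $v$ at $z_i$, but it is no longer the $\bar h_i$ that Theorem~\ref{mixedIntgerunconstrained} certified, so~(d) is not inherited. You flag this, but the fix---apply Carath\'eodory first and then prove lattice-freeness for the resulting $h_z$---is exactly the paper's argument and no longer routes through Theorem~\ref{mixedIntgerunconstrained}. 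The infeasible-fiber case, which you leave as ``the technical heart'', is handled in the paper by the same direct construction (its case~(i)), with $u_{i,m+1}=0$ and $u_i$ a convex combination supported on the most-violated constraints, giving~(b) and~(e) immediately; your merit-function alternative would still need to be reconciled with the specific complementarity structure of~(a) and~(b), since the merit minimiser on a feasible fiber need not coincide with the constrained minimiser of $f$.
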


\begin{proof}
In order to prove the first implication, we assume that $x^\star$ is optimal.
Let $X^\star$ denote the set of all optimal solutions to \eqref{eq:constrained_MICO}. This set is not empty by assumption, and contains $x^\star$.
If there exists a point $x\in X^\star$ with $0\in\partial f(x)$, then the 
theorem follows directly from the purely continuous version of the KKT conditions described above; we can also take $k = 1$.
Next, assume there exists an $x\in X^\star\cap\intt(\conv(X^\star))$ and $h_x\in\partial f(x)$ such that $h_x\neq0$.
By convexity, $f$ must be constant on $\conv(X^\star)$, contradicting that $h_x\neq 0$.
This implies that if $X^\star\cap\intt(\conv(X^\star))\neq\emptyset$, then $0\in\partial f(x)$ for all $x\in X^\star$.
Hence, let us assume that $X^\star\cap\intt(\conv(X^\star))=\emptyset$ and that $0\notin\partial f(x)$ for all $x\in X^\star$.

For every $z\in\Z^n$ we  consider the following continuous convex subproblem, 
\begin{equation}\label{chapter:CenterPoints::equ:ContSubProb}
\min\limits_{y\in\R^d,\atop g((z,y))\le0} f((z,y)).
\end{equation}
We distinguish two cases.

\bigskip
(i) {\bf Problem \eqref{chapter:CenterPoints::equ:ContSubProb} is infeasible}. Let us define
$$y_z:=\argmin_{y\in \R^d}\max_{1\le i \le m}g_i((z,y)).$$
Let {$I_z:=\{1
\leq j\leq m \l g_j((z,y_z))=\max_{1\le i \le m}g_i((z,y_z))\}$}.
Since $y_z$ is an optimal solution to an unconstrained convex problem{,} there exists a vector $h_z\in\partial\max_{1\le i \le m}g_i((z,y_z))=\conv(\{\partial g_{i,z}((z,y_z))\l i\in I_z\})$
such that $h_z\in\R^n\times\{0\}^d$.
We can write $$h_z=\sum_{j=1}^{m}u_{z,j}h_{z,j}$$ with $u_{z,j}\ge0$ for $j\in I_z$, $u_{z,j} = 0$ for $j\notin I_z$, $\sum_{j\in I_z}u_{z,j} = 1$ and $h_{z,j}\in\partial g_j((z,y_z))$ for $j=1,\dots,m$. We also define $u_{z,m+1}:=0$.
From Caratheodory's Theorem it follows that we can choose {$u_z  = (u_{1,z},\ldots,u_{m,z},u_{z,m+1}) = (u_{1,z},\ldots,u_{m,z},0)$} such that $|\supp(u_{{z}})|\le d+1$. We verify Conditions (b), (c), and (e).

\bigskip

(ii) {\bf Problem \eqref{chapter:CenterPoints::equ:ContSubProb} is feasible}. We define
$$y_z:=\argmin_{y\in \R^d}\{f((z,y))\l g((z,y))\le0\}.$$
Since by our initial assumption $x^\star$ is optimal,  it follows that $f((z,y_z))\ge f(x^\star)$. From Slater's condition, we can apply the standard continuous KKT conditions. There exist a vector of multipliers $u_z\in\R^{m+1}_+$, a vector 
$h_{z,m+1}\in\partial f((z,y_z))$, and vectors $h_{z,j}\in\partial g_j((z,y_z))$ for $j=1,\dots,m$
such that 
$$u_{z,m+1}>0,\;  
u_{z,j}g_j((z,y_z))=0 {\text{ for } j=1,\dots,m}.$$
Condition (a) would thereby be verified whatever fiber minimizer we would take in our certificate.

Furthermore, the KKT conditions implies also that $\sum_{j=1}^{m+1}u_{z,j}h_{z,j}\in\R^n\times\{0\}^d$, which will lead to Condition (e). 

Note that $u_{z,m+1}>0$ implies that $|\supp(u)|\ge 1$.  
Caratheodory's Theorem implies that we can choose $u$ such that $|\supp(u)| \le d+1$, which will yield Condition (c).

Next we define
$$
h_z:=\sum_{j=1}^{m+1}u_{z,j}h_{z,j}\in\R^n\times\{0\}^d
$$
and the open half-space
\[
L_z:=\{ (z',y')\in\R^n \l h_z^\T ((z',y') - (z,y_z)) < 0\}.
\]
Since the last $d$ components of $h_z$ are null, $(z',y')$ belongs to $L_z$ if and only if the whole fiber containing $z'$ belongs to $L_z$. So, the fiber of $z$ does not belong to $L_z$ because $(z,y_z)\notin L_z$.

Suppose now that $(z',y')$ is a (continuous) feasible point that does not belong to $L_z$. Then $f((z',y'))\geq f((z,y_z))$. Indeed, we first have:
\[
0\geq\sum_{j=1}^mu_{z,j}g_j(z',y') = \sum_{j=1}^mu_{z,j}(g_j(z',y')-g_j(z,y_z))\geq \sum_{j=1}^mu_{z,j}h_{z,j}^\T ((z',y') - (z,y_z)),
\]
where we have used successively the nonnegativity of the multipliers $u_{z,j}$, the complementarity conditions, and the convexity of the functions $g_j$. Since $u_{z,m+1}>0$, we deduce that:
\[
0\leq h_{z,m+1}^\T ((z',y') - (z,y_z))\leq f((z',y')) - f((z,y_z))
\]
as announced.

Therefore, the intersection $L:=\cap_{z\in\Z^n}L_z$ is lattice-free and contains every feasible point $(z,y)$ for which $f((z,y))<f(x^\star)$. We have excluded at the beginning of this proof the situations where $L$ could be empty.

It follows from \cite{Doignon1973} that a sub-selection of $k\le2^n$ inequalities $h_{z_i}^\T y < h_{z_i}^\T z_i $, $i=1,\dots,k$ suffices to describe a lattice-free polyhedron containing $L$. By convexity, one of those $k$ points has to be a solution $x^\star$ to our mixed-integer problem. Then we obtain the desired certificate by defining $x_1=x^\star,x_2=(z_2,y_{z_2}),\dots,x_k=(z_k,y_{z_k})$. Note that we have written $h_i$ for $h_{z_i}$ in the theorem's statement.

All the conditions are now satisfied.

To prove the other direction, let $x_1,\ldots,x_k$ be the points in the certificate, and consider the open polyhedron: 
$$P:=\{x\in\R^{n+d} \l  \sum_{j=1}^{m+1}u_{i,j}h^\T_{i,j} (x-x_i) < 0 \text{ for all } i = 1,\ldots,m\},$$
with $h_{i,j}$ as defined in the statement of the Theorem.
We assume that Conditions (a) -- (e) are satisfied. In particular, $P$ is lattice-free.
Let $\bar x\in\Z^n\times\R^d$.
Then $\bar x$ must violate at least one inequality of $P$, say the $i$-th inequality, i.e., 
$$\sum_{j=1}^{m+1}u_{i,j}h^\T_{i,j} (\bar x-x_i) \geq 0.$$
Since $v:=\sum_{j=1}^{m+1}u_{i,j}h_{i,j}$ is a subdifferential of the convex function $\psi(x):=u_{i,m+1}f(x)+\sum_{j=1}^{m}u_{i,j}g_j(x)$ at $x_i$, the minimum of $\psi(x)$ over the half-space $\{x \mid v^\T (x-x_i) \geq 0\}$ is precisely $x_i$.

Assume $g(x_i)\nleq0$. Since $u_{i,m+1} = u_{i,j} = 0$ for all $j\notin I_i$, we get
\[
0\leq\sum_{j\in I_i}u_{i,j}h^\T_{i,j} (\bar x-x_i)\leq \sum_{j\in I_i}u_{i,j}(g_j(\bar x) - g_{\max}(x_i)),
\]
from which we deduce that $\bar x$ is not feasible.

Assume now that $g(x_i)\le0$. Then $u_{i,j}g_j(x_i) = 0$, so \begin{eqnarray*}
0\leq \sum_{j = 1}^{m+1}u_{i,j}h^\T_{i,j} (\bar x-x_i) &\leq &\sum_{j = 1}^{m}u_{i,j}(g_j(\bar x)-g_j(x_i)) + u_{i,m+1}(f(\bar x)-f(x_i))\\
&=&\sum_{j = 1}^{m}u_{i,j}g_j(\bar x) + u_{i,m+1}(f(\bar x)-f(x_i)). 
\end{eqnarray*}
So, if $\bar x$ is feasible, the sum of the $m$ first terms above is nonpositive, and $f(\bar x)\geq f(x_i)$. Thus the best point $x^\star$ among those in the certificate must be optimal.

\end{proof}

As an illustration of the above theorem, let us consider the mixed-integer Euclidean Projection problem:
\begin{equation}\label{eq:MIP_projection}
y^\star = \arg\min \{\norm x-y \norm_2 \mid g(x) \leq 0,  x \in \R^n\times\Z^d\},
\end{equation}
for a given point $y\in\R^{n+d}$.

The continuous version of this problem has a unique solution, say $y_{\textrm{cont}}$, which satisfies the so-called \emph{projection condition}:
\[
\text{for all feasible } x,\quad(y-y_{\textrm{cont}})^\T(x-y_{\textrm{cont}})\leq 0.
\]
Observe that this projection condition implies:
\[
\text{for all feasible } x,\quad\norm x-y \norm_2 \geq  \norm x-y_{\textrm{cont}} \norm_2.
\]
\begin{corollary}
{Assume that the feasible set fulfills the mixed-integer Slater's condition. The certificate $x_1,\ldots,x_k$ given for the problem \eqref{eq:MIP_projection} satisfies the projection property:
\[
\text{for all mixed-integer feasible } x\text{ there exists } 1\leq i\leq k\text{ for which }(y-x_i)^\T(x-x_i)\leq 0,
\]
and
\[
\text{for all mixed-integer feasible } x\text{ there exists } 1\leq i\leq k\text{ for which }\norm x-y \norm_2 \geq  \norm x-x_i \norm_2.
\]}
\end{corollary}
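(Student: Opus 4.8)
The plan is to specialize the ``if'' direction of the proof of Theorem~\ref{mixedIntgerKKT} to the objective $f(x)=\|x-y\|_2$ and then convert the supporting-hyperplane inequality it produces into the two asserted inequalities by means of the identity $\|a-b\|_2^2=\|a\|_2^2-2a^\T b+\|b\|_2^2$. Note first that $f$ is continuous and convex on $\R^{n+d}$, that a mixed-integer feasible point for \eqref{eq:MIP_projection} is in particular continuous-feasible, and that, the objective being coercive and the feasible set closed and, by assumption, nonempty, \eqref{eq:MIP_projection} has an optimal solution; so Theorem~\ref{mixedIntgerKKT} applies (after the cosmetic exchange of the integer and continuous coordinate blocks) and yields a certificate $x_1,\dots,x_k$ with nonnegative vectors $u_1,\dots,u_k\in\R_+^{m+1}$ and subgradients $h_{i,m+1}\in\partial f(x_i)$, $h_{i,j}\in\partial g_j(x_i)$ satisfying Conditions~(a)--(e). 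Fix an arbitrary mixed-integer feasible point $x$. By Condition~(d), $x$ does not satisfy all the strict inequalities defining the lattice-free set there, so $\sum_{j=1}^{m+1}u_{i,j}h_{i,j}^\T(x-x_i)\ge0$ for some index $i$.

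First I would show that such an index $i$ can be taken with $g(x_i)\le0$. If instead $g(x_i)\nleq0$, then, reproducing the corresponding step in the proof of Theorem~\ref{mixedIntgerKKT}, Condition~(b) gives $0\le\sum_{j\in I_i}u_{i,j}h_{i,j}^\T(x-x_i)\le\sum_{j\in I_i}u_{i,j}\bigl(g_j(x)-\max_{1\le\ell\le m}g_\ell(x_i)\bigr)$; since $\max_{1\le\ell\le m}g_\ell(x_i)>0$ while $\sum_{j\in I_i}u_{i,j}>0$ by Conditions~(b)--(c), this forces $g_j(x)>0$ for some $j$, contradicting the feasibility of $x$. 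Hence the violated inequality belongs to a feasible certificate point $x_i$.

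For this feasible $x_i$ I would run the chain of inequalities already present in the proof: convexity of each $g_j$ gives $h_{i,j}^\T(x-x_i)\le g_j(x)-g_j(x_i)$; multiplying by $u_{i,j}\ge0$, summing over $j\le m$, and using the complementarity $u_{i,j}g_j(x_i)=0$ from Condition~(a) together with $g_j(x)\le0$, one obtains $\sum_{j=1}^m u_{i,j}h_{i,j}^\T(x-x_i)\le0$; combined with $\sum_{j=1}^{m+1}u_{i,j}h_{i,j}^\T(x-x_i)\ge0$ and $u_{i,m+1}>0$ (Condition~(a)), this yields $h_{i,m+1}^\T(x-x_i)\ge0$. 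If $x_i=y$ then $y$ is mixed-integer feasible, hence the unique optimum, $k=1$, $x_1=y$, and both assertions are trivial; otherwise $\partial f(x_i)=\bigl\{(x_i-y)/\|x_i-y\|_2\bigr\}$, so $h_{i,m+1}^\T(x-x_i)\ge0$ reads $(x_i-y)^\T(x-x_i)\ge0$, i.e.\ $(y-x_i)^\T(x-x_i)\le0$, which is the first assertion. For the second, expanding $\|x-y\|_2^2=\|x-x_i\|_2^2-2(y-x_i)^\T(x-x_i)+\|y-x_i\|_2^2$ and using $(y-x_i)^\T(x-x_i)\le0$ gives $\|x-y\|_2^2\ge\|x-x_i\|_2^2$, exactly as in the continuous observation stated just before the corollary.

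Since this is essentially the specialization of an already-proved statement, I do not expect a genuine obstacle. The only points that need care are the reduction to a feasible certificate point $x_i$ (so that the complementarity half of Condition~(a) is available, the subgradient $h_{i,m+1}$ of $f$ being the relevant one) and the degenerate case $x_i=y$; both are dealt with above. Everything else reduces to a single subgradient inequality for $f$ and one application of the parallelogram-type identity.
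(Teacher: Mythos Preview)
Your argument is correct and follows the same route as the paper's proof: invoke Theorem~\ref{mixedIntgerKKT}, use Condition~(d) to find a violated inequality, rule out infeasible certificate points via Condition~(b), and then combine complementarity with the subgradient inequality for $f$ to obtain $(x_i-y)^\T(x-x_i)\ge 0$, from which the distance inequality follows by expanding $\|x-y\|_2^2$. The one difference is that the paper silently replaces the objective by $f(x)=\tfrac12\|x-y\|_2^2$, whose gradient $x_i-y$ is everywhere defined, whereas you keep $f(x)=\|x-y\|_2$ as literally stated in \eqref{eq:MIP_projection} and therefore must treat the degenerate case $x_i=y$ separately; both choices are legitimate and lead to the same conclusion, and your explicit handling of the infeasible-$x_i$ case is in fact more detailed than the paper's one-line ``By feasibility of $x$, we know that $u_{i,m+1}>0$.''
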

\begin{proof}
{
We denote by $S$ the feasible set $\{x\in\R^{n+1}\mid g(x)\leq 0\}$. Theorem \ref{mixedIntgerKKT} for $f(x):=||x-y||^2_2/2$ provides us with a certificate of points $x_1,\ldots,x_k$, its accompanying set of nonnegative vectors $u_i$ and subgradients $h_{i,j}$. Let $x\in S\cap(\Z^n\times\R^d)$. By Condition (d), the point $x$ violates at least one of the inequalities describing the open lattice-free polyhedron, say $\sum_{j=1}^{m+1}u_{i,j}h_{i,j}^\T(x-x_i)\geq 0$. By feasibility of $x$, we know that $u_{i,m+1}>0$. Thus:
\begin{eqnarray*}
0&\leq& \sum_{j=1}^mu_{i,j}h_{i,j}^\T(x-x_i) + u_{i,m+1}h_{i,m+1}^\T(x-x_i)\\
&\leq&\sum_{j=1}^mu_{i,j}(g_j(x)-g_j(x_i)) + u_{i,m+1}(x_i-y)^\T(x-x_i)\\
&=&\sum_{j=1}^mu_{i,j}g_j(x) + u_{i,m+1}(x_i-y)^\T(x-x_i)\\
&\leq& u_{i,m+1}(x_i-y)^\T(x-x_i).
\end{eqnarray*}
The second inequality comes readily from:
\[
||x-y||^2_2 = ||x-x_i||^2_2 + ||x_i-y||^2_2 + 2(x_i-y)^\T(x-x_i)\geq ||x-x_i||^2_2 + ||x_i-y||^2_2.
\]
}
\end{proof}

The following theorem characterizes another set of optimality conditions. We use here a larger set of points in the certificate, and therefore a more complex lattice-free polyhedron. Moreover, these points do not necessarily belong to the lattice. However, the Slater condition becomes much simpler to verify.

\begin{theorem}\label{StrongerOptimalityCertificate}
Assume the standard Slater's condition: there exists a point $s\in\R^{n+d}$ such that $g(s)<0$.
A point $x^\star\in\Z^{n}\times\R^d$ is optimal with respect to \eqref{chapter:CenterPoints::equ:mixedIntegerOpti} 
if and only if 
\begin{enumerate}
\item[(a)] $g(x^\star)\le0$,

\item[(b)] there exist $k+l\le 2^n(d+1)$ points
$x_1=x^\star,x_2,\dots,x_k$ and $y_1,\dots,y_l$ in $\R^{n+d}$ such that $f(x_i)\ge f(x^\star)$ for $i=1,\dots,k$ and $g(y_i)\le 0$ for $i=1,\dots,l$,

\item[(c)] there exist numbers $1\leq j_1,\ldots,j_l\leq m$ such that $g_{j_i}(y_i) = 0$ and there exist subgradients $h_{x_i}\in\partial f(x_i)$ and $h_{y_i}\in\partial g_{j_i}(y_i)$ such that 
\begin{align*}
P:=\{x\in\R^{n+d} \l & h_{x_i}^\T x < h_{x_i}^\T x_i \text{ for all } i=1,\dots,k, \\
& h_{y_i}^\T x\, \le  h_{y_i}^\T y_i \,\text{ for all } i=1,\dots,l\,\}
\end{align*}is lattice-free.
\end{enumerate}
\end{theorem}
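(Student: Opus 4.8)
Suppose the certificate exists and let $\bar x\in\Z^n\times\R^d$ be feasible. For every $i\le l$, convexity of $g_{j_i}$ together with $h_{y_i}\in\partial g_{j_i}(y_i)$ and $g_{j_i}(y_i)=0$ gives $h_{y_i}^\T(\bar x-y_i)\le g_{j_i}(\bar x)-g_{j_i}(y_i)=g_{j_i}(\bar x)\le 0$, so $\bar x$ satisfies every non-strict inequality defining $P$. Since $P$ is lattice-free, $\bar x\notin P$, hence $\bar x$ violates a strict inequality, $h_{x_i}^\T(\bar x-x_i)\ge 0$ for some $i$; then $f(\bar x)\ge f(x_i)+h_{x_i}^\T(\bar x-x_i)\ge f(x_i)\ge f(x^\star)$. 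As $x_1=x^\star$ is feasible by (a), $x^\star$ is optimal.

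\textbf{Necessity.} I would build the certificate by refining the proof of Theorem \ref{mixedIntgerKKT}. First the degenerate case: if $C:=\{x:g(x)\le 0,\ f(x)<f(x^\star)\}=\emptyset$ (equivalently $0\in\partial f$ at some optimal point), then $x^\star$ is also a continuous optimum, and the ordinary KKT conditions -- available under the assumed standard Slater condition -- give $h_f\in\partial f(x^\star)$ and $\lambda_j\ge 0$ with $h_f+\sum_j\lambda_jh_{g_j}=0$, $\lambda_jg_j(x^\star)=0$, with at most $n+d$ positive $\lambda_j$ by Caratheodory. Taking $x_1=x^\star$ and, for each active $j$, a point $y=x^\star$ with $j_i=j$ and $h_y=h_{g_j}$, the set $P$ is empty -- hence lattice-free, since any $x$ in the $g$-half-spaces has $h_f^\T(x-x^\star)=-\sum_j\lambda_jh_{g_j}^\T(x-x^\star)\ge 0$ -- and $k+l\le 1+n+d\le 2^n(d+1)$.

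Assume now $C\neq\emptyset$; it is convex and, by optimality of $x^\star$, meets no point of $\Z^n\times\R^d$. For every $z\in\Z^n$, the fibre subproblem in the proof of Theorem \ref{mixedIntgerKKT} -- the continuous problem on $\{z\}\times\R^d$, or the min-of-max problem when that fibre is infeasible -- produces a point $w_z$, a multiplier vector $u_z$ with $|\supp u_z|\le d+1$, subgradients $h_{z,j}$, and a half-space $L_z=\{v_z^\T(x-w_z)<0\}$ with $v_z=\sum_ju_{z,j}h_{z,j}\in\R^n\times\{0\}^d$, such that $\bigcap_zL_z$ is lattice-free and contains $C$; by Doignon's theorem, $k\le 2^n$ of the $L_z$ already suffice, and by the convexity argument of Theorem \ref{mixedIntgerKKT} one of the corresponding fibre minimisers is optimal and is taken as $x_1=x^\star$. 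The plan is to replace each selected $L_z$ by an intersection of at most $d+1$ half-spaces of the permitted types that still lies inside $L_z$: the resulting $P$ is then contained in $\bigcap_iL_{z_i}$, hence lattice-free, and uses at most $2^n(d+1)$ points. When the fibre of a selected $z$ is feasible, $w_z$ is its $f$-minimiser, $u_{z,m+1}>0$, and complementarity gives $g_j(w_z)=0$ for the active $j$: replace $L_z$ by the strict $f$-half-space $\{h_{z,m+1}^\T(x-w_z)<0\}$ (note $f(w_z)\ge f(x^\star)$) together with the $\le d$ non-strict $g$-half-spaces $\{h_{z,j}^\T(x-w_z)\le 0\}$ over the active $j$; since $u_{z,m+1}>0$, the identity $v_z^\T(x-w_z)=u_{z,m+1}h_{z,m+1}^\T(x-w_z)+\sum_{j}u_{z,j}h_{z,j}^\T(x-w_z)$ shows their intersection lies in $L_z$. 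When the fibre of $z$ is infeasible the active constraints at $w_z$ satisfy $g_j(w_z)=\max_\ell g_\ell(w_z)>0$ and cannot be used directly; here I would slide $w_z$ along the segment towards the standard Slater point $s$ and, for each active $j$, stop at the first point $b_j$ with $g_j(b_j)=0$, choose $h'_j\in\partial g_j(b_j)$, and replace $L_z$ by the $\le d+1$ non-strict $g$-half-spaces $\{h'^\T_j(x-b_j)\le 0\}$.

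\textbf{The hard part} is exactly this last step: one must verify that the translated half-spaces $\{h'^\T_j(x-b_j)\le 0\}$ still have intersection contained in $L_z$. This is where the standard Slater point $s$ is essential (it guarantees the translation is well defined and that $h'^\T_j(s-w_z)<0$), where monotonicity of the subgradients of the $g_j$ together with their convexity must be exploited, and where the extra factor $d+1$ in the count -- absent from Theorem \ref{mixedIntgerKKT} -- genuinely originates. Secondary technical points, handled by the usual precautions, are the strict-versus-non-strict bookkeeping needed so that $\bigcap_iL_{z_i}$ really contains no mixed-integer point, the lower-dimensional configurations requiring one to work in the affine hull of $C$ (or of its projection onto the integer coordinates), and ensuring that the $f$-multiplier in each feasible-fibre split can be kept positive so that the split stays inside $L_z$.
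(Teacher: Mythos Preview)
Your sufficiency argument is correct and essentially identical to the paper's.

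For necessity, however, you take a genuinely different route from the paper, and your route has a real gap precisely where you flag the ``hard part''. The paper does \emph{not} go through the fibre construction of Theorem~\ref{mixedIntgerKKT} at all. Instead it works directly with the level set $L=\{x:f(x)\le f(x^\star)\}$ and the feasible set $S=\bigcap_jS_j$, observes that $\intt(L)\cap S$ contains no point of $\Z^n\times\R^d$, writes each of $L$ and $S_j$ as the intersection of its supporting half-spaces at boundary points (using the standard Slater condition only to guarantee this representation of $S_j$), and then invokes the mixed-integer Helly-type bound of Averkov--Weismantel: from any family of half-spaces whose intersection is $(\Z^n\times\R^d)$-free one may select $2^n(d+1)$ half-spaces whose intersection is still $(\Z^n\times\R^d)$-free. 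That single external result produces the bound $2^n(d+1)$ in one stroke; the factor $d+1$ does not arise from splitting fibres.

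Your strategy for infeasible fibres --- sliding each active constraint from $w_z$ toward a fixed Slater point $s$ to a boundary point $b_j$ and replacing $L_z$ by the intersection of the tangent half-spaces $\{h'^\T_j(x-b_j)\le 0\}$ --- is not merely incomplete, it is false as stated. The subgradient $h'_j\in\partial g_j(b_j)$ is in general \emph{not} parallel to $h_{z,j}\in\partial g_j(w_z)$, so with a single active constraint you are asking one closed half-space to sit inside a non-parallel open half-space, which is impossible. Concretely, take $n=d=1$, $g_1(x)=x_1^2+x_2^2-1$, Slater point $s=(0,\tfrac12)$, fibre $z=2$ with $w_z=(2,0)$ and $L_z=\{x_1<2\}$: the segment from $w_z$ to $s$ meets $\{g_1=0\}$ at a point $b_1$ with $x_2>0$, the gradient there has a nonzero second component, and the resulting tangent half-space contains points with $x_1\ge 2$ (take $x_2$ very negative). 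It therefore neither lies inside $L_z$ nor excludes the fibre $\{2\}\times\R$. Monotonicity of subgradients gives only $(h'_j-h_{z,j})^\T(b_j-w_z)\ge 0$, which controls nothing in the orthogonal directions, so the repair you sketch does not go through. The paper's global level-set argument sidesteps this difficulty entirely: one never needs to relate tangent half-spaces at translated points, because the Averkov--Weismantel selection is made directly from the full family of supporting half-spaces of $L$ and of the $S_j$.
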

\begin{proof}


Let $X^\star$ denote the set of all optimal solutions. We can assume that $X^\star$ is not empty, for otherwise the statement holds vacuously. In view of Slater's condition, the case where a point of $X^*$ coincides with a continuous optimum corresponds to the continuous KKT conditions \eqref{eq:KKT_continuous}, which imply the stated optimality certificate: if all the Lagrange multipliers are null, we simply take $k = 1$, $l = 0$, $x_1 = x^\star$ to ensure that the set $P$ is empty; if the Lagrange multipliers are not all null, we take $k = l = 1$, $x_i = y_1 = x^\star$, and, using the notation of \eqref{eq:KKT_continuous}, we take $h_{y_1}:=\sum_{j=1}^m\lambda_jh_{g_j}/\sum_{j=1}^m\lambda_j$ to get an empty $P$.

Therefore, we assume without loss of generality that no $x^\star\in X^\star$ has a nonnegative $\lambda\in\R^m$ that satisfies the KKT conditions \eqref{eq:KKT_continuous}. Using the argument of the proof of Theorem \ref{mixedIntgerKKT}, we can assume that $X^\star\cap\intt(\conv(X^\star))$ in empty.

Denote by $L$ the level set $L:=\{ x\in\R^{n+d} \l f(x)\le  f(x^\star) \}$, where $x^\star\in X^\star$,
and by $S_1,\ldots,S_m$ the sets  $S_j:=\{ x\in\R^{n+d} \l g_j(x)\le0 \}$, whose intersection $S$ is the feasible set.
Thus $X^\star = L\cap S\cap (\Z^n\times\R^d)$. 

In fact, our assumptions ensure that $\intt(L)\cap S\cap (\Z^n\times\R^d)$ is lattice-free. Suppose otherwise and take a point $x^\star\in X^\star$ that would be in the above set. Then there exists a closed  ball $B$ centered in $x^\star$ and contained in $L$. By
convexity, the maximum of $f$ on $B$ is attained on $\bd(B)$ and cannot exceed $f(x^\star)$. Hence the
function $f$ is constant on $B$, so that $f'(x^\star) = 0$. Taking $\lambda := 0$ shows that $x^\star$ satisfies the continuous KKT conditions, a contradiction.

Since $f$ and $g$ are continuous, $L$ and $S$ are closed. Additionally, the existence of a point $\hat x$ with $f(\hat x)<f(x^\star)$ --- because $0\notin \partial f(x^\star)$ --- allows us to describe $L$ as the intersection of half-spaces defined by its boundary points and their corresponding subdifferentials:
$$ L = \bigcap_{\substack{z\in\bd(L),\\ h \in\partial f(z)} } \{x\in\R^{n+d}  \l h^\T x \le h^\T z\}.$$
The interior of $L$ is easily seen to coincide with: $$ \bigcap_{\substack{z\in\bd(L),\\ h \in\partial f(z)} } \{x\in\R^{n+d}  \l h^\T x < h^\T z\}.$$
Similarly, in view of Slater's conditions, every set $S_j$ can be described as:
\[
S_j = \bigcap_{\substack{z\in\bd(S_j),\\ {h \in\partial g_j(z)}} } \{x\in\R^{n+d}  \l h^\T x \le h^\T z\}, 
\]
so that $S$ is:
\[
S = \bigcap_{j=1}^m\bigcap_{\substack{z\in\bd(S_j),\\ {h \in\partial g_j(z)}} } \{x\in\R^{n+d}  \l h^\T x \le h^\T z\} = \bigcap_{j=1}^m\bigcap_{\substack{z\in\bd(S_j)\cap S,\\ {h \in\partial g_j(z)}} } \{x\in\R^{n+d}  \l h^\T x \le h^\T z\}.
\]

It follows from \cite{AverkovWeismantel12} that a subset of $2^n(d+1)$ 
half-spaces suffice in order to guarantee that the corresponding intersection remains mixed-integer lattice-free. Without loss of generality, we can choose one of the supporting half-spaces in this description (see the construction in \cite{AverkovWeismantel12}); thus, we can take the inequality in the description of $L$ corresponding to $x^\star$ for $x_1$.

To prove the other direction, let $x_1,\ldots,x_k,y_1,\ldots,y_l$ be the points of the certificate and consider the lattice-free set $P$ given by the statement of the theorem. Let $x\in\Z^n\times\R^d$. Then $x$ violates one of the inequalities in the description of $P$. If that violated inequality is of the form $h_{x_i}^\T x < h_{x_i}^\T x_i$, for a point $x_i$ in $\bd(L)$, then we have $f(x)\geq f(x_i) + h_{x_i}^\T (x-x_i)\geq f(x_i) = f(x^\star)$ and $x$ cannot be better than $x^\star$. Otherwise the violated inequality is of the form $h_{y_i}^\T x \leq h_{y_i}^\T y_i$, for a point $y_i$ in $\bd(S_{j_i})$ and a subgradient $h_{y_i}\in\partial g_{j_i}(y_i)$. Since $g_{j_i}(y_i) = 0$, we have
\[
0<h_{y_i}^\T (x-y_i) = g_{j_i}(y_i) + h_{y_i,j}^\T (x-y_i))\leq g_{j_i}(x),
\]
so $x$ is not feasible and therefore not a solution to our problem.
\end{proof}


%
%

\section{The mixed-integer convex dual}

In the purely continuous setting, it is not too difficult to apply the KKT-theorem in order to show a duality theorem. Provided that the standard Slater condition holds, that all functions $f$ and $g_j,\; j=1,\ldots,m$ are continuous and convex, and that the primal and dual feasible sets are nonempty, one has
\begin{equation}\label{eq:continuous_convex_dual}
f^\star = \min\limits_{x\in\R^n }\{\;f(x)\;|\; g(x)\le0\} = \alpha^\star:=\max\limits_{\alpha, u\in\R_+^m }\{\alpha \mid \alpha \leq f(x) + u^Tg(x)\quad \forall x \in \R^n\}.
\end{equation}
In other words, any multiplier $ u \geq 0$ gives rise to an unconstrained convex optimization problem $\alpha = \min\{f(x) + u^Tg(x)\mid x \in \R^n\}$ whose optimum is a lower bound on the optimal primal value $f^\star$.
The naive extension of the continuous convex dual  would be to replace $\R^n$ in  \eqref{eq:continuous_convex_dual} everywhere  by $\Z^n \times \R^d$. This is not correct, though.

\begin{example}
Let $n  = m= 2$, $d = 0$, $f(x) = \frac{1}{2}||x-\mathbf{1}||^2_2$, $g_1(x) = x_1-\frac{1}{2}$, and $g_2(x) = x_2-\frac{1}{2}$, where $\mathbf{1}$ is the all-one vector. The mixed integer optimal point is $x^\star = 0$, so $f(x^\star) = 1$. However,
\[
f(x) + u^Tg(x) = \frac{1}{2}||x-\mathbf{1}||^2_2 + u_1x_1+u_2x_2 - \frac{1}{2}(u_1+u_2),
\]
which equals $\frac{1}{2} + \frac{u_1-u_2}{2}$ at $x = (1,0)^T$ and $\frac{1}{2} + \frac{u_2-u_1}{2}$ at $x = (0,1)^T$, showing that we only obtain a dual bound $\alpha^\star\leq \frac{1}{2}$.
\end{example}

Instead of using just one multiplier per constraint, our dual, as in our KKT Theorem \ref{mixedIntgerKKT} for mixed-integer problems, must use a selection of up to $2^n$ multipliers per constraint. To every mixed integer point is associated one multiplier of the selection. We denote the function that describes this association as $\pi:\Z^n\times\R^d \mapsto \{1,\dots,2^n\}$. 

Formally, the dual object is the pair of the function $\pi$ and the matrix $U\in\R^{2^n \times m}$ that pile up the multipliers. 
Here is a geometrical interpretation of that dual object. Consider a polyhedron $P=\{x\in\R^{n+d} \l Ax\le b\}$, where $A\in\R^{2^n\times (n+d)}$ and $b\in\R^{2^n}$ with a lattice-free interior that contains the continuous optimum and suppose that we are given vectors $U_i\in\R^m_+$ associated to each row of $Ax\le b$. We can use this polyhedron and these nonnegative vectors to generate a lower bound $\alpha$ for the original minimization problem \eqref{chapter:CenterPoints::equ:mixedIntegerOpti} as follows.
For each $i=1,\dots,2^n$, we consider the continuous convex problem
\begin{equation}\label{lowerBoundsFeasibility}
\min\limits_{x\in\R^{d+n}} \{U_ig(x)\mid A_i x \ge b_i\}.
\end{equation}
If the half-space $\{x\in\R^{d+n}\mid A_i x \ge b_i\}$ contains feasible points, the optimal value of \eqref{lowerBoundsFeasibility} is non-positive. Denote by $I_P$ the set of index $i$ of all those half-spaces containing feasible points. We can write:
\begin{equation*}
f(x^\star) \ge \alpha_{P,U}=\min_{i \in I_P} \big\{\min\limits_{x\in\R^{d+n}} f(x) + U_i g(x)\mid A_i x \ge b_i \big\}.
\end{equation*}
These lower bounds lead to the mixed integer duality result stated below.

\begin{theorem}
Let $f:\R^{n+d}\mapsto\R$ and $g:\R^{n+d}\mapsto\R^m$ be convex functions, and assume that the mixed-integer feasible set $\{x\in\Z^{n}\times\R^{d}\l g(x)\le0\}$ is non-empty, compact and contained in the domain of $f$.
Further, assume that $g$ fulfills the  {\it mixed-integer Slater condition}.
Then $\min\{\;f(x)\;|g(x)\le0, x\in\Z^n\times\R^d\}$ equals:
\begin{eqnarray*}
\max\limits_{\alpha\in\R\atop U\in\R_+^{2^n\times m}}\{\;\alpha &\l& \exists\; \pi: \Z^n\times\R^d \mapsto \{1,\dots,2^n\} \text{ satisfying:}\\
&& \forall
x\in\Z^n\times\R^d \text{ we have }
\alpha \le f(x) + U_{\pi(x)} g(x) \text{ or }1 \le U_{\pi(x)} g(x) \},
\end{eqnarray*}
where $U_i$ denotes the $i$-th row of $U$.
\end{theorem}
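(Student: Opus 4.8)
The plan is to prove the two inequalities separately. First observe that the primal minimum is attained: the mixed-integer feasible set $F:=\{x\in\Z^n\times\R^d\mid g(x)\le0\}$ is non-empty and compact and $f$ is finite and convex on $\R^{n+d}$, hence continuous. Fix a minimizer $x^\star\in F$ and write $f^\star:=f(x^\star)$. Weak duality, i.e. that every dual-feasible $\alpha$ is at most $f^\star$, is then immediate: given a dual-feasible triple $(\alpha,U,\pi)$, evaluate its defining condition at $x^\star\in F$. Since $g(x^\star)\le0$ and $U_{\pi(x^\star)}\ge0$ we have $U_{\pi(x^\star)}g(x^\star)\le0<1$, so the alternative $1\le U_{\pi(x^\star)}g(x^\star)$ is excluded, and therefore $\alpha\le f(x^\star)+U_{\pi(x^\star)}g(x^\star)\le f(x^\star)=f^\star$. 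Hence the dual maximum is $\le f^\star$.

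For the reverse inequality I would exhibit a dual-feasible triple with value exactly $f^\star$, extracted from Theorem \ref{mixedIntgerKKT}. That theorem applies, since the problem is feasible and $g$ satisfies the mixed-integer Slater condition, and gives $k\le2^n$ points $x_1=x^\star,x_2,\dots,x_k\in\Z^n\times\R^d$, vectors $u_1,\dots,u_k\in\R_+^{m+1}$, and subgradients $h_{i,m+1}\in\partial f(x_i)$, $h_{i,j}\in\partial g_j(x_i)$ satisfying (a)--(e). Put $v_i:=\sum_{j=1}^{m+1}u_{i,j}h_{i,j}$; by (e) each $v_i\in\R^n\times\{0\}^d$, and by (d) the open set $P=\{x\mid v_i^\T(x-x_i)<0\text{ for all }i\}$ contains no point of $\Z^n\times\R^d$.

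Next I construct $U$ and $\pi$. For each $i\le k$ I split on feasibility of $x_i$. If $g(x_i)\le0$, then (a) gives $u_{i,m+1}>0$ and $u_{i,j}g_j(x_i)=0$; set $U_i:=(u_{i,1},\dots,u_{i,m})/u_{i,m+1}$. Then $v_i/u_{i,m+1}$ is a subgradient at $x_i$ of the convex function $f+U_ig$, whose value at $x_i$ is $f(x_i)$ by complementarity, so $v_i^\T(x-x_i)\ge0$ forces $f(x)+U_ig(x)\ge f(x_i)\ge f^\star$ (the last step by (a)). If $g(x_i)\nleq0$, then (b) and (c) give $u_{i,m+1}=0$, $u_{i,j}=0$ for $j\notin I_i$, and $\sum_{j\in I_i}u_{i,j}>0$, while $g_{\max}(x_i):=\max_\ell g_\ell(x_i)>0$; set $U_i:=(u_{i,1},\dots,u_{i,m})/\bigl(g_{\max}(x_i)\sum_{j\in I_i}u_{i,j}\bigr)$. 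Using $v_i=\sum_{j\in I_i}u_{i,j}h_{i,j}$, convexity of the $g_j$, and $g_j(x_i)=g_{\max}(x_i)$ on $I_i$, the inequality $v_i^\T(x-x_i)\ge0$ then forces $U_ig(x)\ge1$. For $k<i\le2^n$ set $U_i:=0$. Finally, for $\bar x\in\Z^n\times\R^d$, since $\bar x\notin P$ some inequality is violated, say $v_i^\T(\bar x-x_i)\ge0$ with $i\le k$; set $\pi(\bar x):=i$. The two implications above then show that with $\alpha:=f^\star$ the dual condition holds at $\bar x$ (first alternative when $x_{\pi(\bar x)}$ is feasible, second when it is not), so the dual optimum is $\ge f^\star$, and together with weak duality the two values coincide.

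I expect the main obstacle to be bookkeeping rather than a deep difficulty: choosing the right normalization of the KKT multipliers in each regime (dividing by $u_{i,m+1}$ when $x_i$ is feasible, and by $g_{\max}(x_i)\sum_{j\in I_i}u_{i,j}$ when it is not) so that the two alternatives emerge with the correct constants $\alpha$ and $1$, and confirming from conditions (a)--(c) that these denominators are strictly positive. One should also verify that the degenerate situations occurring in the proof of Theorem \ref{mixedIntgerKKT} (for instance $0\in\partial f(x^\star)$, where $v_1=0$ and $P=\emptyset$) are absorbed by the same argument, since then the relevant half-space is all of $\R^{n+d}$ and $f+U_1g\ge f^\star$ everywhere.
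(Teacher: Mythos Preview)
Your proof is correct and follows essentially the same route as the paper: weak duality by evaluating at $x^\star$, then strong duality by invoking Theorem~\ref{mixedIntgerKKT} and normalizing the multipliers (by $u_{i,m+1}$ in the feasible case and by $g_{\max}(x_i)\sum_{j\in I_i}u_{i,j}$ in the infeasible case) to build $U$ and $\pi$. Your infeasible-case denominator is exactly the paper's $\mu=\sum_{j}u_{i,j}g_j(x_i)$ once condition~(b) is used, and your choice $U_i=0$ for unused rows instead of the paper's $U_i=U_k$ is immaterial since $\pi$ never selects those indices.
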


\begin{proof}
We call the minimization problem the primal problem and the maximization problem the dual problem.
The assumptions regarding $f$ and $g$ guarantee that there exists a feasible point $x^\star$ such that $f(x^\star)$ attains the primal optimum.
Then, for any $u\in\R_+^m$ we obtain using the point $x^\star$ as a condition on the optimal dual value,
$$\alpha \le f(x^\star)+u^\T g(x^\star)\le f(x^\star).$$
This bound on $\alpha$ guarantees that the optimal dual solution must be less or equal than the primal value.

To show the other direction we apply Theorem~\ref{mixedIntgerKKT} using the same notation.
Since $x^\star$ is optimal it follows from  Theorem~\ref{mixedIntgerKKT} that there exist $u_1,\dots,u_k$ fulfilling the conditions in Theorem~\ref{mixedIntgerKKT}.
If $u_{i,m+1}>0$, we define $U_{i,j}:=\frac{u_{i,j}}{u_{i,m+1}}$ for $j=1,\dots,m$.
Otherwise, $u_{i,m+1}=0$ and we define $U_{i,j}:=\frac{u_{i,j}}{\mu}$ for $j=1,\dots,m$, where $\mu:=\sum_{j=1}^{m}u_{i,j}g_j(x_i)$. 
Note that, if $k<2^n$ we can introduce artificial redundant rows $U_i=U_k$ for $i=k+1,\dots,2^n$.
Now, we define $\pi$ as follows: $\pi(x):=\min\{\; i \l \sum_{j=1}^{m+1}u_{i,j}h^\T_{i,j} (x-x_i) \ge 0 \}$. The lattice-freeness of the set $P$ in the statement of Theorem~\ref{mixedIntgerKKT} shows that this function is well-defined for every lattice point. If $x$ happens to be infeasible, then $\sum_{i = 1}^mu_{i,j}g_j(x_i)>0$, i.e., $U_{i} g(x)\geq 1$. If $x$ is feasible, then $u_{i,m+1}>0$ and
\[
f(x) + U_{i} g(x)  = f(x) + \sum_{i = 1}^m\frac{u_{i,j}}{u_{i,m+1}}(g_j(x)-g_j(x_i)) \geq f(x_i) + \sum_{i = 1}^{m+1}\frac{u_{i,j}}{u_{i,m+1}}h^\T_{i,j} (x-x_i) \geq f(x_i) \geq f(x^\star).
\]
We conclude that the primal and the dual solution  attain the same objective function value.
\end{proof}

It is straightforward to generalize the previous result slightly:  one may drop the assumptions about the convex functions $f:\R^{n+d}\mapsto\R$ and $g:\R^{n+d}\mapsto\R^m$ that ensure that the primal and dual problem are feasible and bounded.
This then forces us to replace the minimum and the maximum with the infimum and the supremum, respectively.

Let us finally  comment on the linear case. In this special situation we have that $f(x)=c^\T x$ and $g(x)=Ax-b$, with $c \in\Q^{n+d}$, $A\in\Q^{m\times(n+d)}$ and $b\in\Q^m$.
This special setting allows us to simplify the min-max relation and at the same time  highlight the connection to mixed-integer free polyhedra.
Let us  assume without loss of generality that the row vectors $U_1,\dots,U_k$ correspond to the first type of inequality in the duality statement, i.e. $\alpha \leq f(x) + U_{i} g(x)$, and the remaining $U_i$'s correspond to the second  type of inequality, $1 \le U_{i} g(x)$.
We define
\begin{equation*}
\begin{array}{rl}
P(\alpha,U):=\{x\in\R^{n+d} \; | & \alpha - U_i b > (c^\T - U_i A)x \text{ for } i=1,\dots,k \\
& 1 - U_i b > -U_i Ax \text{ for } i=k+1,\dots,m\}.
\end{array}
\end{equation*}
Then the duality statement in the linear mixed-integer situation can be recast as follows:
\begin{corollary}
With the notation introduced above one has
\end{corollary}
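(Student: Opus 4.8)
The corollary specialises the mixed-integer convex dual to the linear data $f(x)=c^\T x$, $g(x)=Ax-b$, and its natural statement is that $\min\{c^\T x\mid Ax\le b,\ x\in\Z^n\times\R^d\}$ equals the maximum of $\alpha$ over all $U\in\R_+^{2^n\times m}$ for which the open polyhedron $P(\alpha,U)$ is lattice-free. The proof is a transcription of the duality theorem just proved: since $f(x)+U_ig(x)$ is now affine in $x$, the disjunction ``$\alpha\le f(x)+U_{\pi(x)}g(x)$ or $1\le U_{\pi(x)}g(x)$'' is a disjunction of two affine inequalities in $x$, and asking that some choice of $\pi$ makes it hold at every mixed-integer point is exactly asking that no mixed-integer point lies in $P(\alpha,U)$, with the rows split according to which of the two alternatives they realise.

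First I would record weak duality in this polyhedral language: if $P(\alpha,U)$ is lattice-free and $x^\star$ is a mixed-integer optimum of the primal, then $x^\star\notin P(\alpha,U)$, so $x^\star$ violates one of the defining inequalities of $P(\alpha,U)$. Feeding $g(x^\star)=Ax^\star-b\le 0$ and $U_i\ge0$ into that violated inequality yields either $\alpha\le c^\T x^\star$ (a row of the first block) or an inequality of the form $1\le U_ig(x^\star)\le 0$, which is impossible (a row of the second block). Hence every dual-feasible $\alpha$ is at most $\min\{c^\T x\mid Ax\le b,\ x\in\Z^n\times\R^d\}$.

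For the reverse inequality I would invoke Theorem~\ref{mixedIntgerKKT} (equivalently the mixed-integer convex dual theorem above) at the optimum $x^\star$ to obtain multiplier vectors $u_1,\dots,u_{2^n}$; normalising them --- dividing by $u_{i,m+1}$ on feasible certificate points and by $\sum_j u_{i,j}g_j(x_i)$ on infeasible ones --- produces rows $U_i$ and a selector $\pi$ certifying $\alpha:=c^\T x^\star$. I would then relabel the rows so that $U_1,\dots,U_k$ are the ones bounding $f+U_ig$ from below and $U_{k+1},\dots$ are the ones certifying infeasibility (duplicating rows if necessary to stay within $2^n$), check that the resulting inequalities are exactly those defining $P(\alpha,U)$, and conclude that $P(\alpha,U)$ is lattice-free: for any $x\in\Z^n\times\R^d$, the index $i=\pi(x)$ makes $x$ fail the $i$-th defining inequality. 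Together with weak duality this gives equality, with the maximum attained at that $U$.

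The step I expect to be delicate is precisely this last translation: matching the normalisation of the $u_i$ to the two-block form of $P(\alpha,U)$, verifying that each of the $2^n$ rows lands in exactly one block, that the scaling makes the right-hand constants come out as $\alpha-U_ib$ and $1-U_ib$ (up to the sign convention fixed in the definition of $P(\alpha,U)$), and that the strictness of the inequalities in $P(\alpha,U)$ corresponds to the open lattice-free polyhedron of Theorem~\ref{mixedIntgerKKT}. Everything else follows directly from the already-proved duality theorem and the weak-duality computation above.
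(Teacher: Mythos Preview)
Your proposal is correct and follows the paper's approach: the paper states this corollary without proof, treating it as an immediate specialisation of the preceding duality theorem to linear data, and your argument is exactly the natural way to spell out that specialisation (weak duality by direct substitution, strong duality by invoking the already-proved theorem and normalising the multipliers). The only thing to flag is what you already flag yourself: the paper's definition of $P(\alpha,U)$ has some sign and indexing quirks (e.g.\ the second block runs to $m$ rather than $2^n$, and the signs on $U_iA$ and $U_ib$ do not literally match the negation of $\alpha\le f(x)+U_ig(x)$), so your caveat ``up to the sign convention'' is doing real work---but this is an artefact of the paper's presentation, not a gap in your reasoning.
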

$$\min\limits_{x\in\Z^n\times\R^d}\{\;c^\T x\;| Ax\le b\}
=\max\limits_{\alpha\in\R\atop U\in\R_+^{2^n\times m}}\{\;\alpha \l P(\alpha,U) \cap (\Z^n\times\R^d) = \emptyset \}.$$

\bibliographystyle{plain}
\bibliography{lit}
\end{document}